\documentclass{amsart}
\usepackage{amsmath}
\usepackage{amssymb}
\usepackage{amsthm}
\usepackage{hyperref}
\usepackage{tikz-cd}

\theoremstyle{plain}
\newtheorem{theorem}{Theorem}[section]
\newtheorem{lemma}[theorem]{Lemma}
\newtheorem{proposition}[theorem]{Proposition}
\newtheorem{corollary}[theorem]{Corollary}
\theoremstyle{remark}
\newtheorem{remark}[theorem]{Remark}

\DeclareMathOperator{\fd}{fd}
\DeclareMathOperator{\Mod}{Mod}
\DeclareMathOperator{\Spec}{Spec}
\newcommand{\F}{\mathbb{F}_2}
\newcommand{\D}{\mathrm{D}}
\newcommand{\HQ}{H^{Q}}
\newcommand{\jj}{J}
\newcommand{\dtimes}{\otimes^{\mathbb{L}}}

\begin{document}

\title[Quillen's conjecture in characteristic two]
{Quillen's finiteness conjecture for the cotangent complex in characteristic two}

\author{Timo Sch\"urg}
\address{Dep. of Mathematics and Natural Sciences, Darmstadt University of Applied Sciences, Germany}

\begin{abstract}
    Building on work of Turner, we propose a proof in characteristic two of 
    Quillen's conjecture on finiteness properties of the cotangent complex for
    homomorphisms between noetherian rings of possibly infinite flat dimension.
\end{abstract}

\subjclass[2020]{Primary 13D03; Secondary 13B10, 14A15, 14A30}

\maketitle

\section{Introduction}\label{sec:intro}

In \cite{quillen-proc}, Quillen made two conjectures on finiteness
properties of the cotangent complex. Writing $f \colon A\to B$ for a homomorphism of noetherian rings which
is essentially of finite type, and $L_{B/A}$ for the relative cotangent complex,
Quillen conjectured:

\medskip
\noindent\textbf{Conjecture I.}
If the flat dimension of $L_{B/A}$ as a $B$-module is finite, and the flat dimension of $B$
as an $A$-module is finite, then $f$ is locally complete intersection. In particular, the
flat dimension of $L_{B/A}$ is less than or equal to one.

\medskip
\noindent\textbf{Conjecture II.}
Drop the assumption that $B$ is of finite flat dimension as an $A$-module from
the first conjecture. Then the flat dimension of $L_{B/A}$ is less than or equal to
two.

The first conjecture was generalised and proved by Avramov \cite{avramov}. A
different proof in finite characteristic was given by Turner
\cite{turner-inventiones}. Recently, Briggs and Iyengar were able to improve
on the result and prove even stronger rigidity properties \cite{briggs-iyengar}.
For a general introduction to the topic, see \cite{iyengarintro}.

The second conjecture remains open in general. Known cases are if the base $A$
is complete intersection in \cite{avramov}, and general algebra retracts by Avramov
and Iyengar \cite{avramoviyengar}. In characteristic $2$, Turner gave a
completely different proof assuming the base ring $A$
being Cohen--Macaulay \cite[Theorem A]{turner}. The aim of this paper is to
improve on Turner's result and give a general proof in characteristic two, not
assuming $A$ to be Cohen--Macaulay.

The algebraic properties of morphisms with flat dimension of $L_{B/A}$ less than or
equal to two are studied under the term \emph{quasi-complete intersection} by
several authors. See \cite{ahs} and the references therein for examples of such
morphisms. A structure theorem for these morphisms is given in
\cite{kustin-sega}.

The engine of Avramov's proof of Conjecture I is a criterion for recognising complete
intersections. He introduces a dg-algebra associated to a morphism of local
rings, and then defines the \emph{deviations} of this dg-algebra. He then
proves that subexponential growth of these deviations implies that the original
morphism is a complete
intersection. The route to obtain Avramov's result then is clear: First prove
subexponential growth of deviations from the assumption of finite
Andr\'e-Quillen homology groups, then apply his criterion for recognizing
complete intersections.

Turner supplies a similar engine for Conjecture II. He associates a simplicial
algebra to a morphism of local rings, and studies the higher divided powers
on the graded homotopy algebra of this simplicial commutative ring. He shows
that nilpotency of certain chains of these operations implies that the original
morphism is a quasi-complete intersection. We recall the precise formulation of
the criterion allowing us to recognize quasi-complete intersections in Theorem \ref{thm:turner}.

In both Turner's and Avramov's approaches, the first step is associating a
derived algebra to a homomorphism of local rings $A \to A/I$. This algebra is
the homotopy pushout $F$ in the 
diagram 
\[
\begin{tikzcd}[column sep=large, row sep=large]
  A
    \arrow[r]
    \arrow[d]
  & \ell \arrow[d] \\
  A/I \arrow[r]
  & F
\end{tikzcd}
\]

This is a sensible object to study, as by derived basechange the cotangent
complex of $A/I$ over $A$ is essentially the same as $L_{D/\ell}$.

To generalize Turner's result, we associate a different derived object to the
morphism $A \to A/I$.
Our simplicial algebra is obtained by squeezing the derived
quotient $A//(f_1, \dots, f_c)$, where $(f_1, \dots, f_c)$ is a generating set of
$I$, between $A$ and $A/I$. The derived quotient always has the property of being quasi-smooth, a
derived version of complete intersection. Since $\pi_0$ of the derived quotient
is the classical quotient, we have a natural map
\[
    A//(f_1, \dots, f_c) \to A/I.
\]

At a closed point with residue field
$\ell$, we can thus form the double homotopy pushout diagram
\begin{equation}\label{eq:double}
\begin{tikzcd}[column sep=large, row sep=large]
  A
    \arrow[r]
    \arrow[d]
  & \ell \arrow[d] \\
  A//(f_1,\dots,f_c)
    \arrow[r]
    \arrow[d]
  & F'
    \arrow[r]
    \arrow[d]
  & \ell \arrow[d] \\
  A/I \arrow[r]
  & F \arrow[r]
  & D
\end{tikzcd}
\end{equation}

We call the resulting simplicial $\ell$-algebra
$D=(A/I)\dtimes_{A//(f_1,\dots,f_c)}\ell$ the
\emph{difference algebra}.

The main property of $D$ which makes it suitable to studying Conjecture II is
that starting from degree 3, it has the same Andr\'e-Quillen homology groups as
$A \to A/I$.

With our main character in place, we briefly sketch how to apply Turner's
criterion to $D$. Our goal is to show that certain chains of higher divided power
operations on $\pi_*D$ become nilpotent. To accomplish this, we use derived completion.

In our Noetherian situation,
classical and derived formal neighbourhoods of the classical quotient $A/I$ and
$A//(f_1, \dots, f_c)$ are pro-equivalent:
$$
\left\{(A/I^n)\right\}_n
\simeq_{\mathrm{pro}}
\left\{\bigl(A//(f_1^n,\ldots,f_c^n)\bigr)\right\}_n.
$$
Tracing this equivalence through our double homotopy pushout diagram
\eqref{eq:double}, this implies that the
formal neighbourhood of the difference algebra is pro-zero. This idea is not
new. I learned it from \cite{tamme}, where they used this technique in their
proof of vanishing of negative K-theory. For more details on the
pro-equivalence, see \cite{kst}. As the higher divided power operations
used in Turner's criterion push deeper and deeper into these pro-zero
neighbourhoods, they have to become nilpotent. The main work of the current paper
is making this geometric intuition precise. 

We will do this by showing that the
divided square doubles ideal powers. As ideal powers become more connective,
transition morphisms have to become zero on homotopy groups. Passing to the
quotient transforms the divided square to the next lower higher divided power,
which is the operation showing up in Turner's criterion.

Our main result then is Corollary~\ref{cor:global}.

\begin{theorem}\label{thm:intro}
Let $A\to B$ be a homomorphism of noetherian rings of characteristic
$2$ with $\fd_B L_{B/A}<\infty$. Then $\fd_B L_{B/A}\leq 2$.
\end{theorem}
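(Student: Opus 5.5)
We will reduce Theorem~\ref{thm:intro} to a statement about a surjection of local rings and then apply Turner's criterion (Theorem~\ref{thm:turner}) to the difference algebra $D$.

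\textbf{Reduction to the local surjective case.} Flat dimension of $L_{B/A}$ is detected on localizations at primes of $B$, since $L$ commutes with localization. So we may take $A\to B$ to be a local homomorphism of noetherian local rings, still of characteristic $2$. A Cohen factorization (after completing, which is faithfully flat and does not change the Andr\'e--Quillen homology over the residue field) writes $A\to A'\to B$ with $A\to A'$ flat with regular closed fibre and $A'\to B$ surjective. The transitivity triangle shows that $L_{A'/A}$ has flat dimension $0$ and that $\fd L_{B/A}$ and $\fd L_{B/A'}$ agree in degrees $\geq 2$. We are therefore reduced to $B=A/I$ with $(A,\mathfrak m,\ell)$ local noetherian. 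In this case $\fd_B L_{B/A}\le n$ is equivalent to the vanishing of the Andr\'e--Quillen homology $D_i(B/A;\ell)$ for $i>n$.

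\textbf{Passing to $D$.} Choose generators $f_1,\dots,f_c$ of $I$ and form the diagram \eqref{eq:double}. By base change, $L_{D/\ell}\simeq L_{F/F'}\otimes \ell$, and the transitivity triangle for $A\to A//(f_1,\dots,f_c)\to A/I$ applies. Since $L$ of the derived quotient over $A$ is free in degree $1$, we get $D_i(\ell\to D;\ell)\cong D_i(B/A;\ell)$ for $i\ge 3$. The hypothesis therefore says that $L_{D/\ell}$ has bounded homology. Turner's criterion, in the form recalled in Theorem~\ref{thm:turner}, asks that certain chains of higher divided power operations $\delta_i$ on $\pi_*D$ be nilpotent, or eventually zero. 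Once that holds, $D$ and hence $A\to A/I$ is quasi-complete intersection in degrees $\ge3$, giving $\fd\le 2$.

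\textbf{Nilpotency via derived completion.} This step is the main obstacle. We use the pro-equivalence $\{A/I^n\}\simeq_{\mathrm{pro}}\{A//(f_1^n,\dots,f_c^n)\}$ and base change it along \eqref{eq:double}. The result is a tower $\{D_n\}$ of derived neighbourhoods of $D$ that is pro-zero, with $D=D_1$ and compatible maps. Next we filter by ideal powers $J^{(k)}$ of the augmentation ideal and prove the key lemma: the divided square $\delta_2$ sends classes in the $k$-th filtration to the $2k$-th filtration. We would try to prove this by computing on free simplicial algebras, using the Cartan formula and the fact that $\delta_2(xy)=x^2\delta_2(y)+\dots$ in characteristic $2$.

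Pro-zeroness then forces the transition maps $\pi_*J^{(2^m)}\to\pi_*D$ to vanish for $m$ large, since the ideal powers grow more connective relative to a fixed degree. Passing to the quotient, $\delta_2$ on the ideal becomes the next lower operation $\delta_i$ appearing in Turner's chain, so long chains factor through these zero maps. The delicate points are making the filtration compatible with the operations, and controlling the connectivity uniformly in the noetherian setting; the latter is where the finiteness of each $\pi_i$ over $\ell$ is needed. Finally we globalize as in Corollary~\ref{cor:global}.
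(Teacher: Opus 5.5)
Your reduction to a surjection of local rings and your comparison $\HQ_i(D)\cong\D_i(A/I|A;\ell)$ for $i\geq 3$ agree with Corollary~\ref{cor:global} and Lemma~\ref{lem:AQ}. There is one small slip: a Cohen factorization only guarantees $\D_i(A'|\widehat A;-)=0$ for $i\geq 2$, not $\fd L_{A'/\widehat A}=0$, but that is all you need. The genuine gap is the nilpotency step, which you describe only heuristically, and the mechanism you name would not close the argument.

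\textbf{The filtration is on the wrong object.} If $J^{(k)}$ means powers of the augmentation ideal of (a model of) $D$, nothing is gained. By Lemma~\ref{lem:filtration}, $\vartheta^N$ of every class is already represented by a cycle in the $2^N$-th power. So vanishing of that filtration is at least as strong as the nilpotence you want. The tower $\{D_n\}$ gives no handle on it, since nothing in your outline makes $\vartheta^N$ on $\pi_*D_1$ factor through the transition maps $D_n\to D_1$. The paper instead proceeds as follows, in the notation of Theorem~\ref{thm:bounded}:
\begin{itemize}
\item It takes a cofibrant pair $(P,I)\to(E,J)$ modelling $E=A//(f_1,\dots,f_c)\to S$.
\item It factors $P\to R\to\ell$ with $R$ degreewise polynomial over $P$ and $R\simeq\ell$, and puts $L_q=I^qR$.
\item Then $R/L_1\simeq D$, and the connecting map $\beta\colon\pi_mD\to\pi_{m-1}L_1$ is injective because $\pi_{>0}R=0$.
\item Lemma~\ref{lem:boundary} gives $\beta(\vartheta^N\xi)=\gamma_2^N\beta(\xi)$.
\item $\gamma_2^N\beta(\xi)$ lies in the image of $\pi_*L_{2^N}$ by the one-line chain-level Lemma~\ref{lem:filtration}, since $\mu(L_q\otimes L_q)\subseteq L_{2q}$. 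No Cartan-formula computation on homotopy is needed.
\end{itemize}

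\textbf{Connectivity at a fixed degree is not enough.} The class $\gamma_2^N\beta(\xi)$ lies in degree $2^N(m-1)$, while $L_{2^N}$ is in general only $2^N$-connective. The degree outruns the connectivity, so nothing is killed. Proposition~\ref{prop:nilpotence} needs $L_q\to L_1$ to vanish on homotopy in \emph{all} degrees once $q>c$. The paper gets this in three steps:
\begin{itemize}
\item Quillen's theorem (Proposition~\ref{prop:quillen}) gives $I^q\in(\Mod_P)_{\geq q}$.
\item The Koszul bound $\pi_iE=0$ for $i>c$ makes $I\simeq J$ $c$-truncated, so $I^q\to I$ is nullhomotopic in $\Mod_P$.
\item This nullhomotopy is then base changed along the degreewise flat map $P\to R$.
\end{itemize}
The order matters: $L_1$ itself is in general not truncated, since $\pi_iL_1\cong\pi_{i+1}D$ for $i\geq 1$.

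Your proposal never uses $\pi_iE=0$ for $i>c$. That is exactly where boundedness of the derived quotient enters; its quasi-smoothness is used only in Lemma~\ref{lem:AQ}. The input you single out instead, finiteness of each $\pi_i$ over $\ell$, plays no role in this step. Finally, the pro-equivalence $\{A/I^n\}\simeq\{A//(f_1^n,\dots,f_c^n)\}$ is only motivation in the paper and is not used in the proof.
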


\subsection*{Acknowledgements}

I would like to thank Herman Stel for reading multiple early versions of this
paper and providing valuable feedback and encouragement. I would also like to thank Christian
Dahlhausen and Can Yaylali for helpful discussions in the early stages of this
project, as well as Lukas Brantner for explaining his approach to Koszul
duality. I thank Jeroen Hekking for pointing me to his joint paper
\cite{hekking} on derived ideals.

\subsection*{Declaration on the use of artificial intelligence}
In exploring countless variations of the strategy described above, I benefited
from discussions with various ChatGPT versions. Since I did not have a
collaborator, the main benefit was having somebody to talk to.
I take full responsibility for the contents of this paper and any remaining errors.

\subsection{Notation}\label{subsec:notation}

Throughout, $\ell$ is a field of characteristic $2$. In the paper, we use
\emph{simplicial algebra} as shorthand for a simplicial commutative $\F$-algebra. A
simplicial commutative $\ell$-algebra is regarded as a simplicial algebra
by restriction of scalars.

Grading is homological. For a simplicial $\F$-vector space $V$ we
write $C(V)$ for the unnormalised chain complex, so that
$H_*C(V)=\pi_*V$. Tensor products, ideals and powers of ideals of
simplicial objects are formed degreewise. Derived tensor products are
denoted $\dtimes$.

For a simplicial commutative ring $P$ we write $\Mod_P$ for the
stable $\infty$-category of $P$-modules with its standard $t$-structure.
A simplicial $P$-module $M$ defines an object of $(\Mod_P)_{\geq 0}$
with homotopy $\pi_*M$, and $M$ is \emph{$n$-connective} if
$M\in(\Mod_P)_{\geq n}$, that is, $\pi_iM=0$ for $i<n$. If
$M\in(\Mod_P)_{\geq q}$ and $N\in(\Mod_P)_{\leq b}$ with $q>b$, every
map $M\to N$ is nullhomotopic.

For a map $A\to B$ of simplicial commutative rings and a $B$-module
$M$ we write $\D_i(B|A;M)=\pi_i(L_{B/A}\dtimes_B M)$ for
Andr\'e--Quillen homology \cite{andre,quillen-proc}. For an augmented
simplicial commutative $\ell$-algebra $X$ put
$\HQ_i(X)=\D_i(X|\ell;\ell)$ as in \cite[\S 2.2]{turner}. $X$ is
\emph{connected} if $\pi_0X=\ell$.

\section{Turner's criterion in characteristic two}
\label{sec:operations-characteristic-two}

To formulate Turner's criterion for recognizing quasi-complete intersections, we
first recall higher divided power operations on homotopy of simplicial
commutative algebras in characteristic two. As there are two excellent sources
on this topic \cite{goerss186, turner}, we keep it short.  Let $X$ be a
simplicial algebra. Dwyer's chain-level operations
\cite{dwyer}, in the normalisation of \cite[\S 1.2]{turner}, are natural maps
\[
  \Theta_i\colon C(X)_n\longrightarrow C(X)_{n+i},\qquad 1\leq i\leq n,
\]
given by $\Theta_i(x)=\alpha_{n-i}(x)$, where
$\alpha_t(x)=\mu\Delta^t(x\otimes x)+\mu\Delta^{t-1}(x\otimes\partial x)$
for $t\geq 1$ and $\alpha_0(x)=\mu\Delta^0(x\otimes x)$. Here $\mu$ is
the multiplication and the maps
$\Delta^k\colon(C(V)\otimes C(W))_{m+k}\to C(V\otimes W)_m$ are natural
in the simplicial vector spaces $V$ and $W$. They satisfy
$\partial\Theta_i(x)=\Theta_i(\partial x)$ for $2\leq i<n$
\cite[\S 3]{goerss186}, \cite[\S 1.2]{turner}. The top operation
$\Theta_n=\gamma_2$ is the divided square of the divided power
structure on $C(X)$ \cite[Proposition 3.2]{turner}. For a cycle $x$
the chain $\Theta_i(x)$ is a cycle representing $\delta_i[x]$, where
$\delta_i\colon\pi_nX\to\pi_{n+i}X$, $2\leq i\leq n$, are Dwyer's higher
divided powers. The \emph{Andr\'e operation} is
\[
  \vartheta=\delta_{n-1}\colon\pi_nX\longrightarrow\pi_{2n-1}X,
  \qquad n\geq 3,
\]
see \cite[(1.4), (2.9)]{turner}. On chains we write
$\vartheta(x)=\Theta_{n-1}(x)$ for $x\in C(X)_n$, $n\geq 3$. 

A key compatibility we will later use is 
$\partial\vartheta(x)=\gamma_2(\partial x)$. By induction, since
$\vartheta^k(x)$ has degree $\geq 3$ for all $k$, we get
\begin{equation}\label{eq:iterate}
  \partial\vartheta^N(x)=\gamma_2^N(\partial x).
\end{equation}
For a proof, see \cite[Lemma 3.3]{turner}.

Turner's criterion then is:

\begin{theorem}[{\cite[Theorem A]{turnerarxiv}}]\label{thm:turner}
Let $X$ be a connected augmented simplicial commutative $\ell$-algebra
such that $\bigoplus_i\HQ_i(X)$ is finite-dimensional. If
$\vartheta=\alpha_1$ acts nilpotently on the indecomposables $Q\pi_*X$, then
$\HQ_i(X)=0$ for $i\geq 3$.
\end{theorem}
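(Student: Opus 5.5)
The plan is to argue by contradiction through a minimal cell model of $X$, and to exploit the fact that in characteristic two the operation $\vartheta=\delta_{n-1}$ on a free generator of degree $n\geq 3$ produces infinitely many non-zero indecomposables. This is roughly the strategy of \cite{turnerarxiv}.

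First, choose a minimal cofibrant model $X\simeq \ell\otimes\mathrm{Sym}(\text{cells})$ relative to $\ell$. Minimality means the attaching maps land in the augmentation ideal squared modulo decomposables. Then the number of cells of dimension $i$ equals $\dim_\ell\HQ_i(X)$. The finiteness hypothesis on $\bigoplus_i\HQ_i(X)$ therefore gives finitely many cells, all of dimension at most some $m$. Suppose, for contradiction, that $\HQ_n(X)\neq 0$ for some $n\geq 3$, and take the smallest such $n$. By connectivity and minimality, the Hurewicz-type map $Q\pi_nX\to\HQ_n(X)$ is surjective in this lowest relevant degree. This lets us pick a class $x\in\pi_nX$ whose image in $Q\pi_nX$ is non-zero and detects an $n$-cell.

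Next, use Goerss's computation \cite{goerss186} of the homotopy of free simplicial commutative $\F$-algebras. On a free generator $x$ of degree $n\geq 3$, the homotopy is the free divided-power algebra on admissible composites of the $\delta_i$. In it, the iterates $\vartheta^k(x)$, of degree $2^k(n-1)+1$, are all non-zero and indecomposable. The goal is to show that these classes survive in $Q\pi_*X$ after the finitely many remaining cells are attached.

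For this I would use the Miller/Goerss spectral sequence that converges to $\HQ_*(X)$. Its $E^2$-term is the André--Quillen homology of $\pi_*X$ as an algebra with divided power operations, and its bottom row is essentially $Q\pi_*X$ modulo the image of the operations. Since $\HQ_*(X)$ vanishes above degree $m$, the $E^2$-page must be killed by differentials in high degrees. Because $x$ is a genuine cell, the chain $x,\vartheta(x),\vartheta^2(x),\dots$ contributes a copy of the free ``$\vartheta$-module'' on $x$ to $E^2$. That copy can only be cancelled if $\vartheta^k(x)$ becomes decomposable or zero in $Q\pi_*X$ for some $k$. The non-vanishing of $x$ in $\HQ_n$ rules this out: cancellation would force $x$ itself to be hit. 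Hence $\vartheta^k(x)\neq 0$ in $Q\pi_*X$ for all $k$, contradicting the assumed nilpotency of $\vartheta$ on $Q\pi_*X$.

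The main obstacle is this last step: controlling how cells attached in degrees $\leq m$ can affect indecomposables in arbitrarily high degrees $2^k(n-1)+1\gg m$. The issue is that attaching a cell introduces relations on which the $\delta_i$ act, so the operations can propagate the effect of low-dimensional cells into high degrees. I would first attempt to handle this by the ``top degree'' instability argument. Here $\vartheta$ is the top non-trivial operation below $\gamma_2$, and $\partial\vartheta=\gamma_2\partial$ by \eqref{eq:iterate}. Using this compatibility, I would check at chain level on the minimal model that $\vartheta^k$ applied to the attaching data of any cell lands in decomposables, so it can never cancel $\vartheta^k(x)$ in $Q\pi_*X$.
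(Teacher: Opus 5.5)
The paper gives no proof of this statement; it quotes it from Turner as a black box. So your argument has to stand on its own, and it does not. Two of its steps are false.

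\textbf{The Hurewicz step.} Here $n$ is only the lowest degree $\geq 3$ with $\HQ_n(X)\neq 0$, not the bottom of $X$. A minimal $n$-cell is typically attached along a product of lower cells, so it is not a cycle and need not be detected in $\pi_nX$. For instance, attach a $3$-cell along $yy'$ to the free algebra on two $1$-cells $y,y'$. Then $\HQ_*(X)$ is $\ell^2$ in degree $1$ and $\ell$ in degree $3$. But the K\"unneth spectral sequence $\operatorname{Tor}^{\Gamma(z)}(\Lambda(y,y'),\ell)\Rightarrow\pi_*X$, with $|z|=2$ and $z\mapsto yy'$, vanishes in total degree $3$. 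Hence $\pi_3X=0$ and there is no $x$ to choose.

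\textbf{The step you flag as the main obstacle.} This is not merely unproved but false: a class detecting a cell need not carry an infinite $\vartheta$-orbit. Minimality only requires attaching classes to lie in the kernel of the Hurewicz map $\pi_*\to\HQ_*$. That kernel contains the image of every $\delta_i$, in particular of $\vartheta$, because these operations are quadratic at chain level.
\begin{itemize}
\item Take the free algebra on a $3$-cell $x$ and attach a $6$-cell along $\vartheta(x)=\delta_2x$, which is nonzero in $\pi_5$ of the free algebra.
\item The result is a minimal, connected model with $\HQ_*(X)$ equal to $\ell$ in degrees $3$ and $6$, and $x$ spans $\pi_3X\cong\HQ_3(X)$.
\item Yet $\vartheta(x)=0$ in $\pi_5X$.
\end{itemize}
So your claim that $\vartheta^k(x)\neq 0$ in $Q\pi_*X$ for all $k$ fails already at $k=1$. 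Your proposed chain-level check, that $\vartheta^k$ of attaching data lands in decomposables, also cannot succeed, since here the attaching datum is $\vartheta(x)$ itself.

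The spectral sequence does not rescue the argument either. As you note yourself, the bottom row of the $E^2$-term is $Q\pi_*X$ modulo all operations. A free $\vartheta$-orbit on $x$ therefore contributes only $x$ there, and nothing in high degrees has to be cancelled. The example also shows that cutting the orbit does not force $x$ to be hit: $x$ survives to $\HQ_3$. By Turner's theorem, a non-nilpotent $\vartheta$-orbit must still exist in this $X$, but on some other class.

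What is missing is a global argument that uses the whole structure of $\pi_*X$ as an algebra with operations. Relations such as $\vartheta^k(x)\equiv 0$ live in the higher rows of $E^2$, not the bottom one. Tracking a single preselected cell cannot work.
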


\begin{remark}
    While the notation $\vartheta$ is a clever reindexing, it is important to
    keep in mind that the iterates $\vartheta^N$ change degree in each
    iteration. If we for instance start with a degree three class $x_3$,
    the iterates are
    $$
        x_3\xrightarrow{\delta_2}x_5\xrightarrow{\delta_4}x_9
        \xrightarrow{\delta_8}x_{17}\longrightarrow\cdots.
    $$
    In this example the words describing the iterated operation are powers of
    two.
\end{remark}

\section{The Andr\'e operation on a quotient}\label{sec:quotient}

This section collects all technical results we will use later in our proof. The first result explores how Dwyer's chain level operations $\Theta_i$ interact with taking powers
of an ideal. They behave exactly as one would expect.

\begin{lemma}\label{lem:filtration}
Let $R$ be a simplicial algebra and $L_1\supseteq L_2\supseteq\cdots$
simplicial ideals with $L_pL_q\subseteq L_{p+q}$. Then
$\Theta_i(C(L_q)_n)\subseteq C(L_{2q})_{n+i}$ for $1\leq i\leq n$.
\end{lemma}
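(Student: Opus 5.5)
The plan is to unwind the definition of $\Theta_i$ and check that each ingredient respects the filtration. By definition $\Theta_i(x)=\alpha_{n-i}(x)$ for $x\in C(R)_n$. Here $\alpha_t(x)$ is a sum of at most two terms, $\mu\Delta^t(x\otimes x)$ and $\mu\Delta^{t-1}(x\otimes\partial x)$, with only the first term present when $t=0$. So it suffices to show that each such term lies in $C(L_{2q})$ whenever $x\in C(L_q)_n$.

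The first step is to observe that $C(L_q)\subseteq C(R)$ is a subcomplex. This holds because $L_q$ is a simplicial subobject of $R$, so face maps preserve it, and the differential of the unnormalised complex is the alternating sum of faces. Consequently $\partial x\in C(L_q)_{n-1}$ as well. Both $x\otimes x$ and $x\otimes\partial x$ therefore lie in the image of
\[
(C(L_q)\otimes C(L_q))_{*}\longrightarrow (C(R)\otimes C(R))_{*}.
\]

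The second step uses naturality of $\Delta^k$ in the pair of simplicial vector spaces $(V,W)$. Apply it to the inclusions $L_q\hookrightarrow R$ in both variables. Then $\Delta^k$ applied to an element of $(C(L_q)\otimes C(L_q))_{m+k}$ equals the image of $\Delta^k$ computed for $V=W=L_q$. That image lies in $C(L_q\otimes L_q)_m$, pushed into $C(R\otimes R)_m$. This is the main point to verify carefully: the operation $\Delta^k$ (an Eilenberg–Zilber/cup-$i$ type map) is built only from faces and degeneracies on each factor, so it really lands in the image of $C(L_q\otimes L_q)$. Naturality gives this formally, since $L_q\otimes L_q\to R\otimes R$ is a map of simplicial vector spaces. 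Injectivity is not even needed, only the commuting square.

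The third step applies the multiplication $\mu\colon R\otimes R\to R$, which is degreewise. The composite $L_q\otimes L_q\to R\otimes R\to R$ has image in degree $m$ equal to the products $a b$ with $a,b\in (L_q)_m$. Since ideals and their products are formed degreewise, this image lies in $(L_qL_q)_m\subseteq (L_{2q})_m$. Hence $\mu\Delta^k(x\otimes y)\in C(L_{2q})_m$ for $x,y\in C(L_q)$. Summing the two terms of $\alpha_{n-i}$, with degrees $m=n+i$, gives $\Theta_i(x)\in C(L_{2q})_{n+i}$.

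For the top case $i=n$, where $t=0$, only the term $\mu\Delta^0(x\otimes x)$ occurs and the same argument applies. The expected obstacle is purely bookkeeping: one must make sure $\Delta^k$ is natural for the simplicial subspaces $L_q$ rather than only for simplicial algebras, so that it can be restricted. The paper states naturality in arbitrary simplicial vector spaces $V,W$, which covers exactly this.
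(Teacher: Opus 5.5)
Your proposal is correct and follows essentially the same argument as the paper. You use naturality of $\Delta^k$ to land in $C(L_q\otimes L_q)$, the fact that $\partial$ preserves the subcomplex $C(L_q)$, and that $\mu$ sends $L_q\otimes L_q$ into $L_qL_q\subseteq L_{2q}$; the paper does the same, and you simply spell out the degree bookkeeping in more detail.
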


\begin{proof}
The maps $\Delta^k$ are natural in the simplicial vector spaces, so they
take $C(L_q)\otimes C(L_q)$ into $C(L_q\otimes L_q)$. The differential
preserves $C(L_q)$, and $\mu$ maps $L_q\otimes L_q$ into
$L_qL_q\subseteq L_{2q}$.
\end{proof}

We next collect two results we will later apply to the morphism $\ell \to D$,
where $D$ is our difference algebra. The first 
records how the boundary homomorphism exchanges the Andr\'e
operation on a quotient with the divided square on its ideal. For a
simplicial ideal $L$, the latter operation is defined using the
non-unital simplicial algebra structure on $L$.

\begin{lemma}\label{lem:boundary}
Let $R$ be a simplicial algebra, let $L\subseteq R$ be a simplicial
ideal, and put $X=R/L$. Write
$\beta\colon\pi_mX\to\pi_{m-1}L$ for the boundary homomorphisms
associated to the exact sequence of underlying simplicial vector spaces
\[
  0\longrightarrow L\longrightarrow R\longrightarrow X\longrightarrow0.
\]
For every $\xi\in\pi_mX$ with $m\geq3$ and every $N\geq0$, one has
\begin{equation}\label{eq:boundary-operations}
  \beta(\vartheta^N\xi)=\gamma_2^N\beta(\xi)
  \qquad\text{in }\pi_{2^N(m-1)}L.
\end{equation}
\end{lemma}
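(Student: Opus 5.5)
We compute the boundary homomorphism at the chain level and use the iteration formula \eqref{eq:iterate}. Recall that $\beta$ is obtained by lifting a cycle in $C(X)$ to a chain in $C(R)$, applying the differential, and observing that the result lies in $C(L)$. Choose a cycle $\bar x\in C(X)_m$ representing $\xi$, and a lift $x\in C(R)_m$. Then $\partial x\in C(L)_{m-1}$ is a cycle representing $\beta(\xi)$.

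Next we lift the iterated class. The operations $\Theta_i$ are natural in the simplicial algebra, so they commute with the quotient map $R\to X$. Hence $\vartheta^N(x)\in C(R)_{2^N(m-1)+1}$ maps to $\vartheta^N(\bar x)$. Since $\bar x$ is a cycle of degree $m\ge 3$, the chain $\vartheta^N(\bar x)$ is a cycle representing $\vartheta^N\xi$. To see this, note that each iterate has degree $\ge 3$, so $\Theta_{n-1}$ with $n-1\ge 2$ commutes with $\partial$; alternatively apply \eqref{eq:iterate} in $X$, where $\gamma_2^N(0)=0$. So $\vartheta^N(x)$ is a valid lift, and by definition $\beta(\vartheta^N\xi)$ is represented by $\partial\vartheta^N(x)$.

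Now \eqref{eq:iterate} applied in $C(R)$ gives
\[
\partial\vartheta^N(x)=\gamma_2^N(\partial x).
\]
Here $\gamma_2$ is computed in $R$ on the element $\partial x\in C(L)$. By naturality of $\Delta^k$ and $\mu$ (as in Lemma~\ref{lem:filtration}), this chain lies in $C(L)$ and agrees with the divided square computed in the non-unital simplicial algebra $L$. The point is that $\gamma_2=\Theta_n$ is built only from $\mu$, the $\Delta^k$ and the differential, all of which restrict to $L$. Iterating, $\gamma_2^N(\partial x)$ computed in $L$ is a cycle representing $\gamma_2^N\beta(\xi)$. For the degree check: $\gamma_2$ doubles degree, so the class lies in degree $2^N(m-1)$, as claimed.

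The main obstacle is verifying that \eqref{eq:iterate} holds for arbitrary chains $x$, not just cycles, and that no correction term appears. The iterates of a non-cycle need not be cycles, so the identity $\partial\vartheta(y)=\gamma_2(\partial y)$ must be the chain-level formula of \cite[Lemma 3.3]{turner} valid for all $y$ of degree $\ge 3$. I would check this against the explicit formula $\alpha_1(y)=\mu\Delta^1(y\otimes y)+\mu\Delta^0(y\otimes\partial y)$, together with the relation $\partial\Delta^1+\Delta^1\partial=$ (the symmetrization of $\Delta^0$). A second, minor point is ensuring that $\gamma_2$ on a cycle of $L$ is well defined on homotopy in the non-unital setting. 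This follows from the same naturality, since $\Theta_n$ descends to homotopy for any simplicial (non-unital) algebra.
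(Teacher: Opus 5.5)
Your proposal is correct and follows the paper's proof essentially verbatim: lift a representing cycle of $\xi$ to a chain $u\in C(R)_m$, use naturality so that $\vartheta^N(u)$ lifts the representing cycle of $\vartheta^N\xi$, and apply \eqref{eq:iterate} to get $\partial\vartheta^N(u)=\gamma_2^N(\partial u)$ in $C(L)$. The point you flag as the main obstacle is already covered by the paper's setup. The identity $\partial\vartheta(x)=\gamma_2(\partial x)$ for arbitrary (non-cycle) $x$ of degree $n\ge 3$ is just the case $i=n-1$ of the stated chain-level relation $\partial\Theta_i(x)=\Theta_i(\partial x)$ for $2\le i<n$, because $\Theta_{n-1}$ on $C(X)_{n-1}$ is $\gamma_2$; so no separate check via the $\Delta^k$ relations is needed.
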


\begin{proof}
Let $w\in C(X)_m$ be a cycle representing $\xi$, and choose a lift
$u\in C(R)_m$. Then $v=\partial u$ is a cycle in $C(L)_{m-1}$
representing $\beta(\xi)$. By naturality, $\vartheta^N(u)$ lifts the
cycle $\vartheta^N(w)$ representing $\vartheta^N\xi$. Equation
\eqref{eq:iterate} therefore gives
\[
  \beta(\vartheta^N\xi)
  =[\partial\vartheta^N(u)]
  =[\gamma_2^N(v)]
  =\gamma_2^N\beta(\xi).
\]
Here the brackets denote homology classes in $C(L)$. The degrees agree
since $\vartheta^N\xi$ has degree $1+2^N(m-1)$.
\end{proof}

Together with Lemma~\ref{lem:filtration}, this gives the following
variant of \cite[Lemma 3.3 and Corollary 3.4]{turner}. It shows how the
operations become nilpotent on quotients when sufficiently deep ideal
terms map trivially on homotopy to the first term. In our later application $R$
will be resolution of $\ell$, and $R/L_1$ will be equivalent to $D$.

\begin{proposition}\label{prop:nilpotence}
Let $R$ be a simplicial algebra, let
$L_1\supseteq L_2\supseteq\cdots$ be as in Lemma~\ref{lem:filtration}, and
put $X=R/L_1$. Assume the connecting homomorphism $\beta: \pi_d X \to \pi_{d-1}
L_1$ is injective for $d \geq 3$. Suppose $b\geq 0$ is such that $L_q\to L_1$ induces zero on
homotopy for every $q>b$. Then $\vartheta^N=0$ on $\pi_mX$ for all
$m\geq 3$ and all $N$ with $2^N>b$.
\end{proposition}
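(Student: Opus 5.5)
Take $\xi\in\pi_mX$ with $m\geq 3$ and $N$ with $2^N>b$. Because $\vartheta^N\xi$ has degree $1+2^N(m-1)\geq 3$, the connecting map $\beta$ is injective on that group. So it is enough to prove $\beta(\vartheta^N\xi)=0$ in $\pi_{2^N(m-1)}L_1$. By Lemma~\ref{lem:boundary},
\[
  \beta(\vartheta^N\xi)=\gamma_2^N\beta(\xi).
\]
The task therefore reduces to showing that $\gamma_2^N$ kills the class $\beta(\xi)\in\pi_{m-1}L_1$.

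We argue at the chain level, as in the proof of Lemma~\ref{lem:boundary}. Let $w$ be a cycle representing $\xi$, choose a lift $u\in C(R)_m$, and set $v=\partial u$. Then $v$ is a cycle in $C(L_1)_{m-1}$ representing $\beta(\xi)$. The top operation $\gamma_2=\Theta_n$ is one of Dwyer's $\Theta_i$, so Lemma~\ref{lem:filtration} applies. Iterating it $N$ times gives
\[
  \gamma_2^N(v)\in C(L_{2^N})_{2^N(m-1)}.
\]
The lemma requires $1\leq i\leq n$, and this holds for $i=n$ in every degree $n\geq 1$; here $m-1\geq 2$. We also need $\gamma_2^N(v)$ to be a cycle in $C(L_{2^N})$. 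This holds because $\gamma_2$ of a cycle is a cycle representing the divided square, by the divided power structure and \cite[Proposition 3.2]{turner}, applied to the non-unital simplicial algebra $L_q$.

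Thus $\gamma_2^N(v)$ defines a class $c\in\pi_{2^N(m-1)}L_{2^N}$, and the class $\gamma_2^N\beta(\xi)\in\pi_*L_1$ is the image of $c$ under the map induced by the inclusion $L_{2^N}\to L_1$. Since $2^N>b$, the hypothesis says $L_{2^N}\to L_1$ is zero on homotopy. Hence $\gamma_2^N\beta(\xi)=0$, so $\beta(\vartheta^N\xi)=0$, and injectivity of $\beta$ gives $\vartheta^N\xi=0$.

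The main point needing care is that $\gamma_2^N(v)$ really is a cycle inside the subcomplex $C(L_{2^N})$, so that it defines a class there and not merely in $C(L_1)$. For this I would rely on naturality of $\gamma_2$ applied to the non-unital algebra $L_{2^{k}}$ at each stage, combined with the inclusion $L_{2^k}L_{2^k}\subseteq L_{2^{k+1}}$ from Lemma~\ref{lem:filtration}. One must also check that the identity $\partial\vartheta^N(u)=\gamma_2^N(\partial u)$ of \eqref{eq:iterate} holds as chains, not just up to homology, so that the two descriptions of $\beta(\vartheta^N\xi)$ agree. The paper's statement of \eqref{eq:iterate} is already a chain-level identity.
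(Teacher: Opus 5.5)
Your proof is correct and takes the same route as the paper's. You use injectivity of $\beta$ and Lemma~\ref{lem:boundary} to reduce to $\gamma_2^N\beta(\xi)$, then Lemma~\ref{lem:filtration} shows this class comes from $\pi_*L_{2^N}$, which maps to zero in $\pi_*L_1$ since $2^N>b$. You also supply details the paper leaves implicit: the degree check $1+2^N(m-1)\geq 3$, and the cycle property in $C(L_{2^N})$, which follows even more directly because $\gamma_2^N(v)=\partial\vartheta^N(u)$ is a boundary and $C(L_{2^N})\subseteq C(L_1)$ is a subcomplex.
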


\begin{proof}
Let $\xi\in\pi_mX$ with $m\geq3$, and let $N$ satisfy $2^N>b$.
By
Lemmas~\ref{lem:boundary} and~\ref{lem:filtration},
\[
  \beta(\vartheta^N\xi)
  =\gamma_2^N\beta(\xi).
\]
Since $2^N > b$, we know that the right hand side lies in
\[
 \operatorname{im}\!\left(
    \pi_{d-1}L_{2^N}\longrightarrow\pi_{d-1}L_1
  \right),
\]
and thus is zero.
Injectivity of $\beta$
gives $\vartheta^N\xi=0$.
\end{proof}

This shows that once deep enough transition morphisms $L_q \to L_1$ induce zero
on homotopy, we have a good chance of showing that $\vartheta$ becomes
nilpotent. To get such vanishing, we use a result of Quillen.
This result lies at the heart of studying
derived completions as in \cite{bhatt-completions} or \cite[\S~7.3]{SAG}, and provides
the tools to compare derived formal neighbourhoods with classical formal
neighbourhoods.

\begin{proposition}[{\cite[Theorem 8.8]{quillen-convergence}}]
\label{prop:quillen}
Let $P$ be a simplicial algebra with polynomial terms $P_n$, and let
$\jj\subseteq P$ be a simplicial ideal such that $\jj_n$ is generated by
a subset of the polynomial generators of $P_n$ and $\pi_0\jj=0$. Then
$\jj^q\in(\Mod_P)_{\geq q}$ for all $q\geq 1$. Consequently, if
$\pi_i\jj=0$ for $i>b$, the inclusion $\jj^q\to\jj$ is nullhomotopic in
$\Mod_P$ for all $q>b$.
\end{proposition}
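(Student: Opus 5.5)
The plan is to prove the connectivity statement by passing to the associated graded of the $\jj$-adic filtration, where everything becomes a symmetric power computation. The ``consequently'' part is then formal.

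\emph{Step 1: the associated graded is a symmetric algebra.} In each simplicial degree write $P_n=\mathrm{Sym}_{\F}(V_n\oplus W_n)$, where $V_n$ spans the polynomial generators lying in $\jj_n$. Then $\jj_n=(V_n)$ is generated by a regular sequence of variables. Hence
\[
\jj^q_n/\jj^{q+1}_n\cong \mathrm{Sym}^q_{(P/\jj)_n}\bigl((\jj/\jj^2)_n\bigr),
\]
and $(\jj/\jj^2)_n$ is free over $(P/\jj)_n=\mathrm{Sym}(W_n)$. The comparison map $\mathrm{Sym}^q_{P/\jj}(\jj/\jj^2)\to \jj^q/\jj^{q+1}$ is natural and defined intrinsically, without reference to the chosen generators. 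It is therefore a map of simplicial $P/\jj$-modules, and by the above it is an isomorphism in each degree.

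\emph{Step 2: connectivity of the graded pieces.} The surjection $\jj\to\jj/\jj^2$ of simplicial modules is surjective on $\pi_0$, so $\pi_0(\jj/\jj^2)=0$. A degreewise free simplicial $P/\jj$-module $M$ with $\pi_0M=0$ is $1$-connective. For such $M$ the simplicial symmetric power $\mathrm{Sym}^q_{P/\jj}(M)$ computes the derived symmetric power, because $M$ is degreewise free. By Dold--Puppe connectivity estimates it is then $q$-connective; in characteristic $2$ one can argue through the $q$-fold tensor power and the Eilenberg--Zilber theorem, since the quotient by the symmetric group action does not lower connectivity for degreewise free inputs. Thus $\jj^q/\jj^{q+1}\in(\Mod_{P/\jj})_{\geq q}$, hence also in $(\Mod_P)_{\geq q}$. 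By induction along the short exact sequences, $\jj^q/\jj^{q+N}$ is $q$-connective for every $N\geq 1$.

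\emph{Step 3: passing from the finite quotients to $\jj^q$ itself.} This is where I expect the main difficulty. From the long exact sequence we get, for $i<q$, that every class in $\pi_i\jj^q$ lifts to $\pi_i\jj^{q+N}$ for every $N$. We need to rule out classes that are infinitely $\jj$-divisible in homotopy. I would first try to compare $\jj^q$ with its degreewise $\jj$-adic completion $\lim_N \jj^q/\jj^{q+N}$. The transition maps are degreewise surjective, so the Milnor sequence shows the completion is $q$-connective, with a $\lim^1$ term of a tower of zero groups in the relevant range. It then remains to show that $\jj^{q}\to\widehat{\jj^{q}}$ is injective on $\pi_i$ for $i<q$. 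For this I would use Krull intersection degreewise together with the fact that $P_n$ is polynomial. If that step resists, the fallback is Quillen's original route: filter by $\jj^{m}$ and use that in simplicial degree $k$ the normalized chains of $\jj^m$ vanish for $m>k$ after replacing $P$ by a suitable free resolution in which $\jj$ is generated in positive degrees, which is possible because $\pi_0\jj=0$.

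\emph{Step 4: the consequence.} Since $\jj$ is connective with $\pi_i\jj=0$ for $i>b$, we have $\jj\in(\Mod_P)_{\leq b}$. By the above, $\jj^q\in(\Mod_P)_{\geq q}$ with $q>b$. The $t$-structure remark in the notation section then shows that the inclusion $\jj^q\to\jj$ is nullhomotopic.
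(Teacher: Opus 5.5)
\paragraph{The gap is Step~3.} Your Steps~1, 2 and~4 are sound, and Step~4 is exactly how the paper derives the second assertion. For the first assertion the paper gives no argument of its own: it invokes Quillen's theorem (each $\jj_n$ regular, plus $\pi_0\jj=0$), with Bhatt as a modern reference. Step~3 is the substantive half of that theorem, and your proposed route through it does not reduce anything.

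Write $\widehat P=\lim_N P/\jj^N$ degreewise. Since $\pi_i\widehat{\jj^q}=0$ for $i<q$, injectivity of $\pi_i\jj^q\to\pi_i\widehat{\jj^q}$ is literally the assertion $\pi_i\jj^q=0$. Moreover $\widehat{\jj^q}/\jj^q\cong\widehat P/P$ degreewise, independently of $q$, so the claim for all $q$ is equivalent to $P\to\widehat P$ being a weak equivalence.

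Krull's theorem in each simplicial degree only says that $P_n\to\widehat P_n$ is injective, which tells you nothing about $\pi_*(\widehat P/P)$. A class in $\bigcap_N\operatorname{im}(\pi_i\jj^{q+N}\to\pi_i\jj^q)$ is represented by different cycles for different $N$. Killing it successively modulo $\jj^{q+1},\jj^{q+2},\dots$ produces corrections that converge only in $\widehat P$. Polynomial degree cannot force termination, because the face maps are arbitrary ring maps preserving $\jj$.

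The fallback rests on a false chain-level claim. Take $P=\mathrm{Sym}(V)$, where $V$ is the simplicial vector space whose normalized chain complex is $\F$ in degree~$1$ (so $V_0=0$, $V_1=\F y$), and $\jj=(V)$. Then $\jj$ is generated in positive degrees, yet $y^m$ is a nonzero normalized chain of $\jj^m$ in simplicial degree~$1$ for every $m$. Connectivity of $\jj^m$ is a homological statement, not a vanishing of chains.

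A minor point on Step~2: the Dold--Puppe estimate is what does the work there. The aside about strict $\Sigma_q$-coinvariants is not an argument, since degreewise coinvariants are not homotopy invariant in characteristic~$2$.

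\paragraph{One way to close it.} Base change to $B=P/\jj$ instead of completing.
\begin{itemize}
\item Since $\pi_0\jj=0$, we have $\pi_0P\cong\pi_0B$. So a connective $P$-module $M$ satisfies $\pi_k(M\dtimes_PB)\cong\pi_kM$ for its lowest nonzero homotopy group. It therefore suffices to show that $\jj^q\dtimes_PB$ is $q$-connective.
\item $\jj^q\dtimes_PB$ is the realization of $n\mapsto\jj^q_n\dtimes_{P_n}B_n$, via the two-sided bar construction.
\item For each fixed $n$, the map $\mathrm{Tor}^{P_n}_t(\jj^{m+1}_n,B_n)\to\mathrm{Tor}^{P_n}_t(\jj^m_n,B_n)$ is zero. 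Let $Y_n$ be the chosen generators of $\jj_n$. By flat base change along $\F[Y_n]\to P_n$, the map becomes $\mathrm{Tor}_t((Y_n)^{m+1},\F)\to\mathrm{Tor}_t((Y_n)^m,\F)$. Powers of an ideal generated by variables have linear resolutions (pass to finite subsets of $Y_n$ if needed), so source and target sit in $Y$-degrees $m+1+t$ and $m+t$.
\item Hence each map $\jj^{m+1}\dtimes_PB\to\jj^m\dtimes_PB$ vanishes on the $E^1$-page of the skeletal spectral sequence and lowers the skeletal filtration on homotopy by one. The composite $\jj^{2q}\dtimes_PB\to\jj^q\dtimes_PB$ of $q$ such maps is therefore zero on $\pi_i$ for $i<q$.
\item $(\jj^q/\jj^{2q})\dtimes_PB$ is $q$-connective by your Step~2. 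The long exact sequence then gives $\pi_i(\jj^q\dtimes_PB)=0$ for $i<q$, hence $\jj^q\in(\Mod_P)_{\geq q}$.
\end{itemize}
This uses only information about each $P_n$ separately, so it does not matter that the sets $Y_n$ are not compatible with the simplicial structure.
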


\begin{proof}
Each $\jj_n$ is a regular ideal, so the first assertion is Quillen's
convergence theorem. See the proof of \cite[Proposition 4.11]{bhatt-completions}
for a modern reference. The second assertion
follows since $\jj\in(\Mod_P)_{\leq b}$ and $q>b$.
\end{proof}

This leads us to the main technical result of this paper. This result will allow
us to apply Quillen's theorem to the natural map $A//(f_1,
\dots f_c) \to A/I$, and via base change to the homotopy cofibre $\ell \to D$.

\begin{theorem}\label{thm:bounded}
Let $E$ be a simplicial algebra with an augmentation $E\to\ell$, put
$S=\pi_0E$, and suppose $\pi_iE=0$ for $i>b$. Then
$\vartheta^N=0$ on $\pi_m(S\dtimes_E\ell)$ for all $m\geq 3$ and all $N$
with $2^N>b$.
\end{theorem}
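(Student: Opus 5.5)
We plan to reduce Theorem~\ref{thm:bounded} to Proposition~\ref{prop:nilpotence}, applied to a polynomial resolution of $\ell$ over a free model of $E$. First we replace $E$ by a cofibrant model: choose a simplicial algebra $P$ with polynomial terms $P_n$ and a weak equivalence $P\to E$. Next we factor the composite augmentation $P\to E\to\ell$ as $P\to R\to\ell$, where $R$ is obtained from $P$ by freely adjoining polynomial generators in each degree, and $R\to\ell$ is a weak equivalence. The ideal whose quotient computes the derived base change is then the kernel $J$ of $P\to\pi_0P=S$, extended to $R$. We choose $P$ so that $J$ is generated levelwise by a subset of the polynomial generators of $P_n$. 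This is possible by building $P$ skeletally as a cell algebra: start with a polynomial ring $P_0$ surjecting onto $S$, then kill the kernel and the higher homotopy by cells attached in positive degrees. Cells of dimension $\ge1$ give generators lying in the kernel of the map to $\pi_0$ once they are degenerated appropriately, or else one uses the standard Quillen construction of the ``$\pi_0$-killing'' ideal.

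With this set up, $JR$ is generated levelwise by polynomial generators of $R_n$, and $R/JR\simeq R\otimes_P P/J\simeq \ell\dtimes_P S$. Because $R$ is levelwise free over $P$, this is a model of $S\dtimes_E\ell$. We set $L_q=(JR)^q=J^qR$, so that $L_pL_q\subseteq L_{p+q}$ holds and $X=R/L_1$.

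Then we verify the two hypotheses of Proposition~\ref{prop:nilpotence}.

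\emph{Injectivity of $\beta$.} We have $\pi_*R=\ell$ concentrated in degree $0$, so the long exact sequence of $0\to L_1\to R\to X\to0$ shows that $\beta\colon\pi_dX\to\pi_{d-1}L_1$ is an isomorphism for $d\ge2$.

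\emph{Vanishing of $L_q\to L_1$ on homotopy for $q>b$.} Here we use Proposition~\ref{prop:quillen}, applied to $J\subseteq P$, and then base change along $P\to R$. We have $\pi_0J=0$. The exact sequence $J\to P\to S$, together with $\pi_iP=\pi_iE$, gives $\pi_iJ\cong\pi_iE$ for $i\ge1$, which vanishes for $i>b$. Proposition~\ref{prop:quillen} therefore shows that $J^q\to J$ is nullhomotopic in $\Mod_P$ for $q>b$. Applying $-\otimes_PR$, which is exact and derived because $R$ is levelwise free over $P$, together with the identification $J^q\otimes_PR\cong J^qR$, shows that $L_q\to L_1$ is nullhomotopic and hence zero on homotopy. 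Proposition~\ref{prop:nilpotence} then yields $\vartheta^N=0$ on $\pi_mX$ for $m\ge3$ and $2^N>b$.

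The main obstacle is arranging the model $P$ so that $J$ is generated by polynomial generators levelwise. This is needed so that Quillen's convergence theorem applies, and also so that $J^q\otimes_PR\to R$ stays injective with image $J^qR$, which uses regularity of the generators. I would first try the explicit construction: take $P_0\to S$ surjective with kernel generated by variables after a change of coordinates, by adding variables mapping to the generators of $\ker(\mathbb{F}_2[\ldots]\to S)$ using a Koszul-type trick. Failing that, I would invoke the standard statement from \cite{bhatt-completions} that any connective $E$ admits such a free resolution relative to $\pi_0$.
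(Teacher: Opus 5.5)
There is a genuine gap in how you build the model. You take $J$ to be the kernel of $P\to\pi_0P=S$ and require each $J_n$ to be generated by a subset of the polynomial generators of $P_n$. This is impossible unless $S$ is itself a polynomial $\F$-algebra. The map $P_n\to\pi_0P$ is surjective, split by degeneracies, so $P_n/J_n\cong S$ for every $n$. On the other hand, quotienting a polynomial ring by some of its variables gives a polynomial ring in the remaining variables. Already in degree $0$, $J_0=\ker(P_0\to S)$ consists of the relations of $S$, and cells attached in positive degrees do not change $J_0$. No change of coordinates or Koszul-type adjunction of variables helps, because the quotient $P_0/J_0$ is $S$ whatever you do. In the intended application $S=A/I$ is local, so the hypothesis of Proposition~\ref{prop:quillen} is never available for your $J$. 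Your step ``$J^q\to J$ is nullhomotopic for $q>b$'' is therefore unsupported. Your fallback reference to \cite{bhatt-completions} points to the right tool, but it does not produce a resolution in which $\ker(P\to\pi_0P)$ is generated by variables, since no such resolution exists.

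The missing idea is that the ideal need only model $\ker(E\to S)$ up to weak equivalence, with quotient a free resolution of $S$ rather than $S$ itself. The paper first observes that $E\to S$ is levelwise surjective with kernel $J$, satisfying $\pi_0J=0$ and $\pi_iJ\cong\pi_iE$ for $i\geq1$. It then takes a cofibrant replacement $(P,I)\to(E,J)$ in the model category of pairs (simplicial algebra, simplicial ideal), as in Bhatt's proof of his Corollary 4.14. There each $I_n$ is generated by variables, $P\to E$ and $I\to J$ are weak equivalences, and $P/I$ is only weakly equivalent to $S$. Quillen's theorem applies to $I$, and the rest of your argument goes through with $I$ in place of $J$, with one extra step. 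To identify $R/IR=(P/I)\otimes_PR$ with $S\dtimes_E\ell$, you must use flatness of $R_n$ over $P_n$ (Illusie's lemma) to see that this ordinary tensor product computes the homotopy pushout $(P/I)\dtimes_PR$. Your argument for injectivity of $\beta$ and the flat base change of the nullhomotopy are fine. Note also that $I^q\otimes_PR\cong I^qR$ needs only flatness, not regularity of the generators.
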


\begin{proof}
The map $E\to S$ is degreewise surjective, since $E_0\to\pi_0E$ is
surjective and factors through every $E_n$ via a degeneracy. Let $J$ be
its kernel. Then $\pi_0J=0$ and $\pi_iJ\cong\pi_iE$ for $i\geq 1$, so
$\pi_iJ=0$ for $i>b$. Choose a cofibrant replacement
$(P,I)\to(E,J)$ in the model category of pairs consisting of a
simplicial algebra and a simplicial ideal, defined over $\F$ exactly as
over $\mathbb Z$ in \cite[proof of Corollary 4.14]{bhatt-completions}:
each $P_n$ is a polynomial algebra, $I_n$ is generated by a subset of
its polynomial generators, and $P\to E$ and $I\to J$, hence also
$P/I\to S$, are weak equivalences. By Proposition~\ref{prop:quillen},
$I^q\to I$ is nullhomotopic in $\Mod_P$ for $q>b$.

Factor $P\to E\to\ell$ as $P\to R\to\ell$, an almost free map followed
by a trivial fibration \cite[Proposition 1.4]{goerss186}. Each $R_n$ is a
polynomial $P_n$-algebra, hence flat over $P_n$. Using a result of Illusie
\cite[Lemme 3.3.2.1]{illusie}, 
we obtain 
$M\dtimes_PR\simeq M\otimes_PR$, naturally in $M$.

Put $L_q=I^qR$. By
flatness, $L_q=I^q\otimes_PR$, so $L_q=L_1^q$, $L_pL_q=L_{p+q}$. By Illusie's
lemma, 
$L_q\to L_1$ is the base change of $I^q\to I$ along $P\to R$, hence
nullhomotopic in $\Mod_R$ for $q>b$. Moreover $\pi_iR=0$ for $i>0$.

Finally $R/L_1=(P/I)\otimes_PR$. If $Q\to P/I$ is a cofibrant
replacement of $P$-algebras, then $Q\otimes_PR$ computes the homotopy
pushout $(P/I)\dtimes_PR$ because $R$ is cofibrant over $P$, and
$Q\otimes_PR\to(P/I)\otimes_PR$ is a weak equivalence by the previous
paragraph. Using the weak equivalences $P\to E$, $P/I\to S$ and
$R\to\ell$, we obtain
\[
  R/L_1\simeq(P/I)\dtimes_PR\simeq S\dtimes_E\ell
\]
as simplicial algebras. As $\pi_i(R)=0$ for $i >0$, the connecting morphism
$\beta$ in the sequence $L_1 \to R \to R/L_1$ is automatically injective from
$i >2$. Thus Proposition~\ref{prop:nilpotence}, applied to
$R$, the ideals $L_q$ and $X=R/L_1$, gives the claim.
\end{proof}

\begin{remark}\label{rem:derived-ideal-filtration}
For the readers not so accustomed to model categories and explicit simplicial
resolutions, we sketch a different argument based on the ideals developed by
\cite{hekking}.  The central inputs remain Quillen's connectivity theorem and
the boundary compatibility with higher divided powers. To me it is not
immediately clear that Quillen's connectivity theorem applies to the ideal
powers of \cite{hekking}, so this is not an alternative proof.

Keeping the notation and assumptions from Theorem \ref{thm:bounded},
put $D=S\dtimes_E\ell$ and let $\mathcal{J}=\operatorname{fib}(E\to S)$, regarded
as a derived ideal of $E$. Write $\mathcal{J}^{[q]}$ for its derived powers as defined
in \cite[\S~3.2]{hekking}. We use brackets to distinguish these powers from
ordinary degreewise powers. 
Define
\[
  \mathcal{K}_q=\mathcal{J}^{[q]}\dtimes_E\ell,
  \qquad
  D_q=(E/\mathcal{J}^{[q]})\dtimes_E\ell.
\]
Compatibility of derived ideal powers with base change
\cite[Remark~4.6.9]{hekking} identifies $\mathcal{K}_q$ with the $q$-th derived
power of $\mathcal{K}=\operatorname{fib}(\ell\to D)$ and $D_q$ with the derived
quotient $\ell/\mathcal{K}_q$. In particular, $\mathcal{K}_1=\mathcal{K}$, $D_1=D$, and there is a
tower of augmented algebras
\[
  \cdots\longrightarrow D_3\longrightarrow D_2\longrightarrow D_1=D.
\]
For $m\geq2$, this gives a decreasing filtration 
\[
  F^q\pi_mD
  :=\operatorname{im}\bigl(\pi_mD_q\longrightarrow\pi_mD\bigr),
  \qquad F^1\pi_mD=\pi_mD.
\]
Since $\ell$ is discrete, the boundary maps of the underlying module
fibre sequences are isomorphisms
\[
  \beta_q:\pi_mD_q\xrightarrow{\sim}\pi_{m-1}\mathcal{K}_q,
\]
thus identifying the filtration with
\[
  F^q\pi_mD
  =\beta^{-1}\!\left(
    \operatorname{im}\bigl(\pi_{m-1}\mathcal{K}_q\longrightarrow\pi_{m-1}\mathcal{K}\bigr)
  \right).
\]
Assuming that Quillen's connectivity holds for the powers $\mathcal{J}^{[q]}$
and using that $\mathcal{J}$ is truncated,
we get
\[
F^q\pi_mD=0\qquad(q>b,\ m\geq2).
\]
Finally, the boundary identity \eqref{eq:iterate} gives us
\[
\vartheta\bigl(F^q\pi_mD\bigr)
\subseteq F^{2q}\pi_{2m-1}D.
\]

\end{remark}

\section{Quillen's Conjecture II}\label{sec:quillen}

We now pull together the previous technical results to our main objective. 

To keep the formulas readable, we introduce some short-hand. For a 
a noetherian local ring $A$ of characteristic
$2$, and $I$ an ideal minimally generated by
$f_1,\ldots,f_c$, put $S=A/I$. Let $E$ denote the derived quotient $A//f_1,
\dots, f_c$, formally obtained as 
\[
  E=A\dtimes_{A[t_1,\ldots,t_c]}A,
\]
where $t_i$ maps to $f_i$ in the left factor and to $0$ in the right
factor. 

As an $A$-module, $E$ is the
Koszul complex on $f_1,\ldots,f_c$. Hence
$\pi_0 E=S$ and 

\begin{equation}\label{eq:bounded}
    \pi_iE=0 \text{ for } i>c.
\end{equation}
See \cite{elmanto} for an introduction to properties of this quotient.

$E$ is an augmented simplicial algebra via $E\to S\to\ell$. This allows us to
form 
\[
  D=S\dtimes_E\ell
\]
which we call the \emph{difference algebra} of $A\to S$. It is obviously the
same as the lower-right corner of the double homotopy pushout diagram
\eqref{eq:double} in the introduction.

The difference algebra is a connected augmented simplicial commutative algebra
over $\ell$. Indeed, $\pi_0D=S\otimes_S\ell=\ell$.
Heuristically, it measures how far $S$ is from being a complete intersection.

This is made precise in the following lemma. It connects the
Andr\'e-Quillen homology groups of the difference algebra with the relative
groups of $S$ over $A$.

\begin{lemma}\label{lem:AQ}
There are isomorphisms $\HQ_i(D)\cong\D_i(S|A;\ell)$ for $i\geq 3$.
If, in addition, \(\bigoplus_i \D_i(S|A;\ell)\) is finite-dimensional,
then \(\bigoplus_i \HQ_i(D)\) is finite-dimensional.
\end{lemma}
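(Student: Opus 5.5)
We compute $\HQ_*(D)$ by base change and then compare with $\D_*(S|A;\ell)$ through the transitivity sequence for $A\to E\to S$.

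\textbf{Base change.} Since $D=S\dtimes_E\ell$ is a homotopy pushout, flat base change for the cotangent complex gives
\[
L_{D/\ell}\simeq L_{S/E}\dtimes_S D.
\]
Hence
\[
\HQ_i(D)=\pi_i\bigl(L_{D/\ell}\dtimes_D\ell\bigr)\cong\pi_i\bigl(L_{S/E}\dtimes_S\ell\bigr)=\D_i(S|E;\ell)
\]
for all $i$.

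\textbf{Transitivity.} The triangle $A\to E\to S$ gives the transitivity cofibre sequence
\[
L_{E/A}\dtimes_E S\longrightarrow L_{S/A}\longrightarrow L_{S/E}.
\]
Tensoring with $\ell$ over $S$ yields a long exact sequence
\[
\cdots\to\D_i(E|A;\ell)\to\D_i(S|A;\ell)\to\D_i(S|E;\ell)\to\D_{i-1}(E|A;\ell)\to\cdots.
\]

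\textbf{Computing $L_{E/A}$.} We have $E=A\dtimes_{A[t_1,\dots,t_c]}A$, and $A[t]\to A$ (with $t\mapsto 0$) is a surjection cut out by a regular sequence, so $L_{A/A[t]}\simeq A^c[1]$. By base change along $A[t]\to A$ (with $t_i\mapsto f_i$),
\[
L_{E/A}\simeq E^c[1],
\]
which is concentrated in homological degree $1$. Therefore $\D_i(E|A;\ell)$ equals $\ell^c$ for $i=1$ and vanishes otherwise. The long exact sequence then gives
\[
\D_i(S|A;\ell)\cong\D_i(S|E;\ell)\cong\HQ_i(D)\quad\text{for } i\geq 3.
\]
The same holds for $i=2$ except for a possible cokernel at the spot $\D_2(S|E;\ell)\to\D_1(E|A;\ell)=\ell^c$.

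\textbf{Finite dimensionality.} In low degrees:
\begin{itemize}
\item $\HQ_0(D)=0$ and $\HQ_1(D)$ fit into the sequence with $\D_1(S|A;\ell)$ and $\ell^c$.
\item $\HQ_2(D)$ sits between $\D_2(S|A;\ell)$ and $\ell^c$.
\end{itemize}
So in each degree $\dim\HQ_i(D)\leq\dim\D_i(S|A;\ell)+c$. Together with the isomorphisms for $i\geq3$, this shows that $\bigoplus_i\HQ_i(D)$ is finite-dimensional whenever $\bigoplus_i\D_i(S|A;\ell)$ is.

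\textbf{Main care point.} The delicate step is justifying $L_{D/\ell}\simeq L_{S/E}\otimes_S D$ for simplicial (non-discrete) rings, i.e.\ base change along the non-flat map $E\to\ell$. This holds because the cotangent complex commutes with homotopy pushouts of simplicial commutative rings (Quillen, Illusie). One must then check that the augmentation $D\to\ell$ is compatible with $S\to\ell$, which is true by construction of the pushout.
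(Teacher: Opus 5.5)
Your proposal is correct and follows essentially the same route as the paper. Both use the base change $L_{D/\ell}\simeq L_{S/E}\dtimes_S D$, the transitivity triangle for $A\to E\to S$ tensored with $\ell$, the identification $L_{E/A}\dtimes_E\ell\simeq\ell^c[1]$, and then the long exact sequence. Your explicit bound $\dim\HQ_i(D)\leq\dim\D_i(S|A;\ell)+c$ in low degrees just spells out the finite-dimensionality step the paper leaves implicit. The base change holds for homotopy pushouts without any flatness hypothesis, as you yourself note, so the word ``flat'' there is a harmless misnomer.
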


\begin{proof}
The transitivity triangle of $A\to E\to S$, tensored with $\ell$ over $S$,
is
\[
  L_{E/A}\dtimes_E\ell\longrightarrow L_{S/A}\dtimes_S\ell\longrightarrow
  L_{S/E}\dtimes_S\ell.
\]
Since $E$ is quasi-smooth over $A$ \cite{elmanto}, the first term is $\ell^c[1]$, a shifted sum
of finite copies of $\ell$. By base change along
$E\to\ell$, we have $L_{D/\ell}\simeq L_{S/E}\dtimes_SD$, so the last term is
$L_{D/\ell}\dtimes_D\ell$. The long exact sequence of homotopy groups
gives the claim.
\end{proof}

We now have all tools assembled for the main theorem. The derived quotient $E$
satisfies the assumptions of Theorem~\ref{thm:bounded}. Combining this with
Turner's theorem and comparing the Andr\'e-Quillen homology groups of $D$ with
the relative groups of $S$ over $A$ will give the result.

\begin{theorem}\label{thm:main}
Let $A$ be a noetherian local ring over a field $\ell$ of characteristic
$2$, I an ideal in $A$, and $S=A/I$. If $\D_i(S|A;\ell)=0$ for $i\gg 0$, then
$\D_i(S|A;\ell)=0$ for all $i\geq 3$. 
\end{theorem}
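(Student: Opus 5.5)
The argument combines three earlier results: Lemma~\ref{lem:AQ} transfers the problem to the difference algebra $D=S\dtimes_E\ell$, Theorem~\ref{thm:bounded} gives nilpotence of $\vartheta$ on $\pi_*D$, and Turner's criterion (Theorem~\ref{thm:turner}) then yields $\HQ_i(D)=0$ for $i\geq 3$. Transporting this back along Lemma~\ref{lem:AQ} gives $\D_i(S|A;\ell)=0$ for $i\geq 3$.

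\textbf{Step 1 (reduction to finite generation).} Choose minimal generators $f_1,\dots,f_c$ of $I$; this is possible because $A$ is noetherian. Form $E=A//(f_1,\dots,f_c)$. By \eqref{eq:bounded}, $\pi_iE=0$ for $i>c$, and $\pi_0E=S$. Augment $E\to S\to\ell$ and set $D=S\dtimes_E\ell$, which is connected.

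\textbf{Step 2 (finiteness of $\HQ_*(D)$).} By hypothesis, $\D_i(S|A;\ell)=0$ for $i\gg0$. Each $\D_i(S|A;\ell)$ is finite-dimensional over $\ell$: $S$ is a quotient of the noetherian ring $A$, so Andr\'e--Quillen homology can be computed from a simplicial resolution of $S$ by polynomial $A$-algebras with finitely many generators in each degree. Hence $\bigoplus_i\D_i(S|A;\ell)$ is finite-dimensional. The second clause of Lemma~\ref{lem:AQ} then says $\bigoplus_i\HQ_i(D)$ is finite-dimensional. In low degrees this amounts to checking, from the exact sequence in the proof of that lemma, that $\HQ_1,\HQ_2$ are finite-dimensional; they are sandwiched between $\ell^c$ and the finite-dimensional groups $\D_1,\D_2$.

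\textbf{Step 3 (nilpotence).} Apply Theorem~\ref{thm:bounded} to $E$ with $b=c$. It gives $\vartheta^N=0$ on $\pi_m D$ for all $m\geq 3$ whenever $2^N>c$. Since $\vartheta$ only involves degrees $\geq3$, we need it to act nilpotently on the indecomposables $Q\pi_*D$ in the sense of Theorem~\ref{thm:turner}. Naturality of $\vartheta$ with respect to the projection $\pi_*D\to Q\pi_*D$ means that the induced operation on $Q\pi_*D$ is a quotient of the operation on $\pi_*D$. So $\vartheta^N=0$ on $Q\pi_{\geq3}D$ as well. This compatibility is the step I expect to be the main obstacle: one must make sure that Turner's $\vartheta$ on indecomposables is indeed induced from the operation on $\pi_*D$. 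For classes in degrees $\leq2$ the condition should be vacuous in Turner's formulation.

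\textbf{Step 4 (conclusion).} Turner's Theorem~\ref{thm:turner} now gives $\HQ_i(D)=0$ for $i\geq3$. The first clause of Lemma~\ref{lem:AQ} identifies $\HQ_i(D)\cong\D_i(S|A;\ell)$ for $i\geq3$, which proves the theorem.
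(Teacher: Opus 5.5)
Your proposal is correct and is essentially the paper's proof: you use the same $E=A//(f_1,\dots,f_c)$ and $D=S\dtimes_E\ell$, Theorem~\ref{thm:bounded} with $b=c$ to get nilpotence of $\vartheta$ on $\pi_{\geq 3}D$ (passed to $Q\pi_*D$ by the same one-line observation the paper makes), Lemma~\ref{lem:AQ} for finiteness of $\bigoplus_i\HQ_i(D)$, then Theorem~\ref{thm:turner} and Lemma~\ref{lem:AQ} again to return to $\D_i(S|A;\ell)$. The only addition is that you justify the finite-dimensionality of each $\D_i(S|A;\ell)$ inside the proof, which the paper defers to the remark following the theorem.
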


\begin{proof}
    As above, set  $E=A//(f_1, \dots, f_c)$, and let $D$ be the difference algebra
    associated to $A \to S$.  Set the bound $b$ needed in
    Theorem~\ref{thm:bounded} to $b=c$. By \eqref{eq:bounded}, $E \to S$
    satisfies the assumptions of Theorem~\ref{thm:bounded}, and we conclude that
    $\vartheta^N$ is nilpotent on $\pi_{\geq 3} D$, and thus also the induced
    operation on $Q\pi_*D$ is nilpotent.

    By Lemma~\ref{lem:AQ}, $\bigoplus_i\HQ_i(D)$ is finite-dimensional, allowing us
    to apply Turner's criterion  Theorem~\ref{thm:turner} and deduce $\HQ_i(D)=0$
    for $i\geq 3$. Finally, using Lemma~\ref{lem:AQ} again we can shift the result
    from $D$ over $\ell$ to $S$ over $A$.
\end{proof}

\begin{remark}
    Equivalently, the theorem can be stated using flat dimension. More
    precisely, the statement is: if $\fd_SL_{S/A}<\infty$ then $\fd_SL_{S/A}\leq
    2$. This holds since, as $A$ is noetherian, $L_{S/A}$ is quasi-isomorphic to
    a bounded-below complex of finitely generated free $S$-modules. Hence each
    $\D_i(S|A;\ell)$ is finite-dimensional and
    $\fd_SL_{S/A}=\sup\{i\mid\D_i(S|A;\ell)\neq 0\}$.
\end{remark}

We finally transfer the result from a surjective local morphism to any morphism of
noetherian rings. This deduction is standard, and due to \cite{avramov}. We
closely follow Turner's argument.

\begin{corollary}\label{cor:global}
Let $A\to B$ be a homomorphism of noetherian rings of characteristic
$2$ with $\fd_BL_{B/A}<\infty$. Then $\fd_BL_{B/A}\leq 2$.
\end{corollary}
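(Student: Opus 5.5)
The plan is the standard reduction of Avramov, as used by Turner, from an arbitrary homomorphism to a surjective map of local rings, followed by an application of Theorem~\ref{thm:main}.

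\emph{Step 1: localise.} Since $B$ is noetherian and $L_{B/A}$ is a complex of flat $B$-modules with good finiteness properties, $\fd_B L_{B/A}$ is the supremum over primes $\mathfrak q\subseteq B$ of $\fd_{B_{\mathfrak q}}L_{B_{\mathfrak q}/A_{\mathfrak p}}$, where $\mathfrak p$ is the contraction of $\mathfrak q$. This uses $L_{B_{\mathfrak q}/A_{\mathfrak p}}\simeq L_{B/A}\otimes_B B_{\mathfrak q}$, because localisation is formally étale. Moreover, for a local homomorphism $(A,\mathfrak m)\to(B,\mathfrak n,k)$ the flat dimension is detected by $\sup\{i\mid \D_i(B|A;k)\neq0\}$. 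Strictly this needs $L_{B/A}$ to be pseudo-coherent, so if $A\to B$ is not essentially of finite type I would phrase everything via vanishing of $\D_i(B|A;k)$. It therefore suffices to prove the following: for a local homomorphism $A\to B$ in characteristic $2$ with $\D_i(B|A;k)=0$ for $i\gg0$, we have $\D_i(B|A;k)=0$ for all $i\geq3$.

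\emph{Step 2: reduce to a surjection.} Complete $B$ first. By flat base change and the fact that $B\to\hat B$ is flat with $\D(\hat B|B;k)=0$ (Andr\'e), the transitivity sequence gives $\D_i(\hat B|A;k)\cong\D_i(B|A;k)$. Next take a Cohen factorisation $A\to A'\to\hat B$ (Avramov--Foxby--Herzog). Here $A\to A'$ is flat with regular closed fibre, so $\D_i(A'|A;k)=0$ for $i\geq2$, and $A'\to\hat B$ is surjective with $A'$ noetherian local of characteristic $2$. The Jacobi--Zariski sequence for $A\to A'\to\hat B$ then yields
\[
\D_i(\hat B|A';k)\cong\D_i(\hat B|A;k)\qquad(i\geq3).
\]
If the residue field extension is not separable, one also has to check degree $2$ and the low-degree corrections, but only $i\geq3$ matters here.

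\emph{Step 3: apply the main theorem.} For $A'\to\hat B=A'/I$, the hypothesis gives $\D_i(\hat B|A';k)=0$ for $i\gg0$. Theorem~\ref{thm:main}, with $\ell=k$ (since $A'$ is a local ring whose residue field is $k$), gives vanishing for $i\geq3$. Transporting this back through the isomorphisms of Step 2 and globalising via Step 1 shows $\fd_B L_{B/A}\leq2$.

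The main obstacle is Step 1 together with the Cohen factorisation bookkeeping. The delicate points are the identification of flat dimension with the top nonvanishing $\D_i(-;k)$ over all localisations, and the precise vanishing of $\D_{\geq2}(A'|A;k)$ needed to get isomorphisms in degrees $\geq3$. I would cite Avramov's treatment in \cite{avramov}, as Turner does, for both points.
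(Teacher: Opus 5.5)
Your proposal is correct and follows essentially the same route as the paper. You localise at each prime $\mathfrak q$ of $B$, pass to a Cohen factorisation with surjective second map, apply Theorem~\ref{thm:main}, and use Avramov's criterion that $\fd_B L_{B/A}\leq n$ is detected by the vanishing of $\D_s(-;k(\mathfrak q))$ for $s>n$ at all primes. The only difference is cosmetic: the paper factors $\widehat A_{\mathfrak p}\to\widehat B_{\mathfrak q}$ and cites \cite[\S 1]{avramov} for the comparison, whereas you factor $A_{\mathfrak p}\to\widehat B_{\mathfrak q}$ and check $\D_i(\widehat B_{\mathfrak q}|A';k)\cong\D_i(B_{\mathfrak q}|A_{\mathfrak p};k)$ for $i\geq 3$ directly from the Jacobi--Zariski sequence.
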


\begin{proof}
We argue as in the proof of \cite[Theorem D]{turner}. Let $q\in\Spec B$ and let
$p\in\Spec A$ be its contraction. By \cite{avramov-foxby-herzog} there is a
Cohen factorisation $\widehat A_p\to A'\to\widehat B_q$ with $A'$ a complete
local ring, $\widehat A_p\to A'$ flat with regular closed fibre, and
$A'\to\widehat B_q$ surjective. By \cite[\S 1]{avramov}, one has
$\fd_BL_{B/A}\leq n$ if and only if $\D_s(\widehat B_q|A';k(q))=0$ for all $s>n$
and all $q\in\Spec B$. As $A'$ has characteristic $2$, Theorem~\ref{thm:main}
applies to $A'\to\widehat B_q$ and gives the claim.
\end{proof}

\bibliographystyle{plain}
\bibliography{mybib_additions}

\begin{thebibliography}{10}

\bibitem{andre}
Michel Andr\'e.
\newblock {\em Homologie des alg\`ebres commutatives}, volume 206 of {\em Die
  Grundlehren der mathematischen Wissenschaften}.
\newblock Springer-Verlag, 1974.

\bibitem{avramov}
Luchezar~L. Avramov.
\newblock Locally complete intersection homomorphisms and a conjecture of
  {Q}uillen on the vanishing of cotangent homology.
\newblock {\em Ann. of Math. (2)}, 150(2):455--487, 1999.

\bibitem{avramov-foxby-herzog}
Luchezar~L. Avramov, Hans-Bj{\o}rn Foxby, and Bernd Herzog.
\newblock Structure of local homomorphisms.
\newblock {\em J. Algebra}, 164(1):124--145, 1994.

\bibitem{ahs}
Luchezar~L. Avramov, In\^es Bonacho Dos~Anjos Henriques, and Liana~M. \c{S}ega.
\newblock Quasi-complete intersection homomorphisms.
\newblock {\em Pure Appl. Math. Q.}, 9(4):579--612, 2013.

\bibitem{avramoviyengar}
Luchezar~L. Avramov and Srikanth Iyengar.
\newblock Andr\'e-{Q}uillen homology of algebra retracts.
\newblock {\em Ann. Sci. \'Ecole Norm. Sup. (4)}, 36(3):431--462, 2003.

\bibitem{bhatt-completions}
Bhargav Bhatt.
\newblock Completions and derived de {R}ham cohomology, 2012.
\newblock arXiv:1207.6193.

\bibitem{briggs-iyengar}
Benjamin Briggs and Srikanth~B. Iyengar.
\newblock Rigidity properties of the cotangent complex.
\newblock {\em J. Amer. Math. Soc.}, 36(1):291--310, 2023.

\bibitem{dwyer}
William~G. Dwyer.
\newblock Homotopy operations for simplicial commutative algebras.
\newblock {\em Trans. Amer. Math. Soc.}, 260(2):421--435, 1980.

\bibitem{elmanto}
Elden Elmanto.
\newblock Lecture 5--6: A primer to derived algebraic geometry {III}.
\newblock Lecture notes for Math 264X: Algebraic Cobordism, Harvard University,
  2020.

\bibitem{hekking}
Zachary Gardner and Jeroen Hekking.
\newblock A note on ideals in derived geometries, 2025.
\newblock arXiv:2511.20331.

\bibitem{goerss186}
Paul~G. Goerss.
\newblock {\em On the {A}ndr\'e-{Q}uillen cohomology of commutative
  $\mathbb{F}_2$-algebras}, volume 186 of {\em Ast\'erisque}.
\newblock Soci\'et\'e Math\'ematique de France, 1990.

\bibitem{illusie}
Luc Illusie.
\newblock {\em Complexe cotangent et d\'eformations {I}}, volume 239 of {\em
  Lecture Notes in Mathematics}.
\newblock Springer-Verlag, Berlin, 1971.

\bibitem{iyengarintro}
Srikanth Iyengar.
\newblock Andr{\'e}-{Q}uillen homology of commutative algebras.
\newblock In {\em Interactions between homotopy theory and algebra}, volume 436
  of {\em Contemporary Mathematics}, pages 203--234. American Mathematical
  Society, Providence, RI, 2007.

\bibitem{kst}
Shane Kelly, Shuji Saito, and Georg Tamme.
\newblock On pro-cdh descent on derived schemes.
\newblock {\em Geom. Topol.}, 30(1):337--372, 2026.

\bibitem{tamme}
Moritz Kerz, Florian Strunk, and Georg Tamme.
\newblock Algebraic {K}-theory and descent for blow-ups.
\newblock {\em Invent. Math.}, 211(2):523--577, 2018.

\bibitem{kustin-sega}
Andrew~R. Kustin and Liana~M. \c{S}ega.
\newblock The structure of quasi-complete intersection ideals.
\newblock {\em J. Algebra}, 632:286--309, 2023.

\bibitem{SAG}
Jacob Lurie.
\newblock Spectral algebraic geometry, 2018.
\newblock Available at the author's homepage.

\bibitem{quillen-convergence}
Daniel Quillen.
\newblock Homology of commutative rings.
\newblock Mimeographed notes, M.I.T., 1968.

\bibitem{quillen-proc}
Daniel Quillen.
\newblock On the (co-)homology of commutative rings.
\newblock In {\em Applications of Categorical Algebra (Proc. Sympos. Pure
  Math., Vol. XVII, New York, 1968)}, pages 65--87. Amer. Math. Soc., 1970.

\bibitem{turner-inventiones}
James~M. Turner.
\newblock Simplicial commutative algebras with vanishing {Andr{\'e}-{Q}uillen}
  homology.
\newblock {\em Invent. Math.}, 142(3):547--558, 2000.

\bibitem{turnerarxiv}
James~M. Turner.
\newblock Nilpotency in the homotopy of simplicial commutative algebras, 2002.
\newblock arXiv:math/0201064.

\bibitem{turner}
James~M. Turner.
\newblock On the rigidity of the cotangent complex at the prime 2.
\newblock {\em J. Pure Appl. Algebra}, 213(7):1224--1238, 2009.

\end{thebibliography}

\end{document}